\newcommand{\Pos}{\mbox{Pos}}
\newcommand{\Tr}{\mbox{Tr}}
\pgfplotsset{my style/.append style={axis x line=middle, axis y line=middle, xlabel={$x$}, ylabel={$y$}, axis equal }}
\pgfplotsset{graphstyle01/.append style={axis x line=middle, axis y line=middle, xlabel={$x$}, ylabel={$y$}, axis equal }}
\newtheorem{lemma}{Lemma}
\newtheorem{remark}{Remark}
\newtheorem{corollary}{Corollary}
\newcommand{\mbbC}{\mathbb{C}}
\newcommand{\mbbR}{\mathbb{R}}
\newcommand{\mcA}{\mathcal{A}}
\newcommand{\mcU}{\mathcal{U}}
\newcommand{\mcV}{\mathcal{V}}
\newcommand{\mcX}{\mathcal{X}}
\newcommand{\mcY}{\mathcal{Y}}
\newcommand{\mcZ}{\mathcal{Z}}
\newcommand{\tilu}{\tilde{u}}
\newcommand{\tilv}{\tilde{v}}
\newcommand{\tilU}{\tilde{U}}
\newcommand{\tilV}{\tilde{V}}
\begin{document}

\begin{center}
{\Large \bf Quantum advantage in decentralized control
of POMDPs: A control-theoretic view of the Mermin-Peres square}\\
~\\
Venkat Anantharam\\
{\em 30 December 2024}\\
{\sl EECS Department}\\
{\sl University of California}\\
{\sl Berkeley, CA 94720, U.S.A.}\\
~\\
{\em (Dedicated to the memory of Pravin P. Varaiya)}
~\\
~\\
{\bf Abstract}
\end{center}

Consider a decentralized partially-observed Markov decision 
problem (POMDP) with multiple cooperative agents aiming to maximize
a long-term-average reward criterion. We observe that the
availability, at a fixed rate, of entangled states 
of a product quantum system between the agents, where
each agent has access to one of the component systems, can 
result in strictly improved performance
even compared to the scenario where common randomness is provided to the agents, i.e. there is a 
quantum advantage in decentralized control. 
This observation comes from a simple reinterpretation of the conclusions of the well-known Mermin-Peres square, which underpins the Mermin-Peres game. While quantum advantage has been demonstrated earlier in one-shot team problems of this kind, it is notable that there are examples where there is a quantum advantage for the one-shot criterion but it disappears in the dynamical scenario. The presence of a quantum advantage in dynamical scenarios is thus seen to be a novel finding relative to the current state of knowledge about the achievable performance in decentralized control problems.

This paper is dedicated to the memory of Pravin P. Varaiya.

\section{Introduction}      \label{sec:introduction}

Consider a pair of agents, Alice and Bob, where Alice has access
to the random variable $A$ and Bob has access to the random 
variable $B$, with $(A,B)$ having some joint distribution which,
for simplicity, we assume is on a finite set.
It is of interest to study the set of all joint
probability distributions $p(x,y,a,b)$ 
where
$X$ is a finite random variable created by Alice without access to $B$ and $Y$ is a finite random variable is created
by Bob without access to $A$. Technically, the concept of ``without access"
corresponds to the so-called ``no-signaling" 
conditions $I(X;B|A) = 0$ and $I(Y;A|B) = 0$ on the
respective conditional
mutual information terms.
The availability of an entangled state of a product quantum system between Alice and Bob, i.e. where Alice has access to the first component of the product state and Bob has access to the second component,
allows for a larger
class of such joint distributions to be created than those
that can be created even with
unlimited common randomness provided to Alice and Bob.
This quantum advantage has been widely 
studied in the foundations of quantum mechanics, associated
with the topic of Bell inequalities; for an overview of some
of this literature, see e.g. \cite{AMO08,GKetc18}.
Since we think of Alice and Bob as working together 
to explore the space of all possible $p(x,y,a,b)$ for a given
$p(a,b)$, for a control-theorist this problem belongs to the 
general subject of team theory, see e.g.
\cite{YukBas24}. For an interesting perspective on the origins
of team theory in control see \cite{BasBan89}.

Recently, several works have begun to appear exploring the 
value of such a quantum advantage in the framework of problems
of decentralized control and game theory, see e.g.
\cite{DK22, DKcdc23, DK1binary23, DK2binary23, Milch24, SYsurvey22}.
See also \cite{AB07}
for an earlier work suggesting both the need to develop a 
theory of
games between teams and the importance in distributed control
of recognizing the gap between joint distributions satisfying
the no-signaling condition and those achievable by common randomness
between the individual decision-makers. This work can be considered as
belonging to this general stream of ideas. For recent works
building a theory of games between teams, see e.g. 
\cite{HBY23, KNM22, SSY24}.

In this work we consider a decentralized partially-observed Markov decision 
problem (decentralized POMDP) with multiple cooperative agents aiming to maximize
a long-term-average reward criterion. 
For simplicity, we focus on the case with two agents, who 
we might as well call Alice and Bob. We observe that the
availability of a stream of entangled product quantum states between the agents can 
result in strictly improved performance, i.e. there is a 
quantum advantage in a decentralized control. 
As opposed to the earlier works exploring quantum advantage in 
decentralized control and team theory referenced above, our framework is 
dynamical, and the quantum advantage is established relative
to all possible adapted classical strategies in this dynamical 
framework. We also give an example where there is a quantum advantage in the static (one-shot) problem 
of maximizing the expected reward at a given time,
but where there is no quantum advantage in the underlying dynamical problem. Thus the presence of dynamical quantum advantage in decentralized control, demonstrated in this paper, is a genuinely new finding. For static team problems the presence of such a quantum advantage has been established earlier, see e.g. 
\cite[Section 4.2]{DK22},
\cite[Example 5.2]{SYsurvey22}.

Our observation is based on a simple reinterpretation of the
well-known and astonishing example in the theory of quantum information called the Mermin-Peres square, which is used in the
so-called Mermin-Peres game, see e.g. \cite{Aravind04}, \cite[Sec. 3.2.2]{Holevo12}, \cite{Mermin90, Peres90}.
No prior familiarity with quantum information is needed to read this paper, since all the essential quantum mechanical background is rigorously and succinctly 
developed in Appendix \ref{app:quantum} and Appendix
\ref{app:mpsquare}. This paper should therefore be accessible to a broad community of control theorists.

This paper is dedicated to the memory of Pravin P. Varaiya,
who contributed several seminal works to the early development of decentralized control and team theory, 
and who, throughout his career, was fascinated by the 
intricate questions about knowledge arising from decentralized information structures. 

\section{A model for a class of decentralized POMDPs
}   \label{sec:generalmodel}

Our purpose is to make a qualitative
point about the advantage provided by quantum entanglement in decentralized control.
Therefore, we eschew generality and focus on a simple decentralized 
POMDP
model with two agents, Alice and Bob, who are working together to maximize a long-term-average-reward criterion.
Further, we assume that the observations of Alice and Bob at each time are drawn from finite sets, as are their actions. Indeed, we will simply assume that Alice and Bob each see one of the components of a two-component state at each time.

Formally, the state of the system evolves in discrete time in the 
finite set $\mcX \times \mcY$ under the influence of an action pair drawn from the finite set 
$\mcU \times \mcV$. The initial condition is
$(X_0,Y_0)$, possibly random, with Alice observing $X_0$ and Bob observing $Y_0$, and both knowing the initial 
probability distribution.
The one-step transition probabilities at time
$n \ge 0$ are time-homogeneous, given by 
the Markovian kernel
\[
q(x_{n+1}, y_{n+1}| x_n, y_n, u_n, v_n),
\]
i.e. 
\begin{equation}    \label{eq:Markov}
\aligned
 P((X_{n+1}, Y_{n+1}) &= (x_{n+1}, y_{n+1}) |
(X_n, Y_n, U_n, V_n) = (x_n, y_n, u_n, v_n), 
\nonumber \\
&~~~~~~~~~~~~~~~~~~~~~~~~~~~~~X_{0:n-1}, Y_{0:n-1}, U_{0:n-1}, V_{0:n-1}, W_{0:n})
\nonumber \\
& ~~~~~~~  = q(x_{n+1}, y_{n+1}| x_n, y_n, u_n, v_n).
\endaligned
\end{equation}
Here, we allow for an unlimited amount of common 
randomness between Alice and Bob, represented
by 
the sequence of random variables $(W_n, n \ge 0)$,
which are assumed to be independent and can each have
an arbitrary distribution taking values in an arbitrary
complete separable metric space.
Further, $(W_n, n \ge 0)$ is assumed to be independent
of the pair $(X_0, Y_0)$. 
For $n \ge 0$, we think of $W_n$ as being provided to both Alice and Bob at time $n$. 
Alice observes 
$((X_n, W_n), n \ge 0)$ and chooses $(U_n, n \ge 0)$ (causally),
while Bob observes $((Y_n, W_n), n \ge 0)$ and 
gets to causally choose $(V_n, n \ge 0)$. Both agents
know the structure of the one-step transition probabilities. 

Formally,
the control strategy of Alice is given by 
deterministic functions
\begin{equation}    \label{eq:AliceU}
U_n = u_n(X_{0:n}, W_{0:n}), n \ge 0,
\end{equation}
and that of Bob by deterministic functions
\begin{equation}        \label{eq:BobV}
V_n = v_n(Y_{0:n}. W_{0:n}), n \ge 0.
\end{equation}
Let
\[
r: \mcX \times \mcY \times \mcU \times \mcV \to \mbbR,
\]
be some given fixed reward function. The 
shared aim of Alice and Bob is to choose their
strategies to as to maximize the long-term-average reward
\begin{equation}        \label{eq:performance}
\liminf_{N \to \infty}
\frac{1}{N} \sum_{n=0}^{N-1} 
E[ r(X_n, Y_n, U_n, V_n)].
\end{equation}

A decentralized POMDP of the kind described above 
will be characterized via
\[
(\mcX, \mcY, \mcU, \mcV, q, r, (W_n, n \ge 0), (X_0,Y_0)).
\]
The strategies $(u_n, n \ge 0)$ and $(v_n, n \ge 0)$ are
chosen by Alice and Bob respectively in order to maximize
the performance objective given in eqn. \eqref{eq:performance}.

Since our aim is to demonstrate the existence of a quantum advantage relative to classical strategies, proving this
for classical strategies that allow for common randomness, as above,
immediately implies that there is a quantum advantage
relative to strategies that only allow private randomization. This is because in defining 
privately randomized strategies the individual random
seeds involved in the private randomizations can be thought of as being provided to both players, while each player just ignores the random seed intended for the other player.

In contrast to the case when there is a 
centralized controller, there is no broadly 
applicable general theory that allows one to 
determine the optimal strategies of Alice and
Bob in problems of this kind. It is clear that
what one needs to come to grips with is the beliefs of each of the controllers about what the other controller believes, but this is
a hierarchical construct, which is not tractable.
For instance Alice has a belief (i.e. 
a conditional probability distribution, given her observations) over the state
(i.e. the pair $(X_n, Y_n)$) at time $n$, as does
Bob. Alice would then need to maintain a belief
about Bob's belief about the state, as would
Bob about Alice's belief about the state, but then 
Alice would need to maintain a belief about Bob's belief about her belief about Bob's belief about the state, and so on. To the best of this author's knowledge, nothing of broad applicability that allows one to penetrate this thicket of beliefs has been discovered in the work so far on decentralized control problems of this nature except, of course, if one imposes various kinds of restrictive assumptions on the underlying dynamics.

Nevertheless, we will show, by example, that 
the availability of quantum entanglement between Alice and Bob at a fixed rate can result in strictly improved performance in problems of this kind. 
We will do this in the context of a specific example,
which is introduced in next section.

\section{A specific example of a decentralized 
POMDP
}   \label{sec:examplemodel}

We restrict attention 
now
to a specific example of a decentralized 
POMDP
with 
two controllers fitting the general model of the preceding section. The existence of a quantum advantage in the
decentralized control of POMDP will be demonstrated in the 
context of this example.

Specifically, we take $\mcX = \mcY = \{1,2,3\}$. We let
\[
\mcU = \{u = (u^{(1)}, u^{(2)}, u^{(3)}) : u^{(l)} \in \{1, -1\} \mbox{ for all $l \in \{1, 2, 3\}$ and $\prod_{l=1}^3 u^{(l)} = 1$}\},
\]
and we let 
\[
\mcV = \{v = (v^{(1)}, v^{(2)}, v^{(3)}) : v^{(k)} \in \{1, -1\} \mbox{ for all $k \in \{1, 2, 3\}$ and $\prod_{k=1}^3 v^{(k)} = -1$}\}.
\]
Let $\frac{1}{9} > \delta > 0$. The Markovian kernel
\[
q(x_{n+1}, y_{n+1}| x_n, y_n, u_n, v_n),
\]
is some fixed kernel, with the only requirement being that 
\begin{equation}        \label{eq:deltabound}
q(i,j| x_n, y_n, u_n, v_n) > \delta \mbox{ for all $(x_n, y_n, u_n, v_n) \in \mcX \times \mcY \times \mcU \times \mcV$}.
\end{equation}
The reward function 
is given by
\begin{equation}        \label{eq:rewards}
r(i,j,u,v) = u^{(j)} v^{(i)}.
\end{equation}

Together with the initial condition, described by the 
random pair $(X_0,Y_0)$, and the structure of the 
common randomness, described by the sequence of indepednent
random variables $(W_n, n \ge 0)$ (which are also independent of $(X_0,Y_0)$) this completely describes a
specific decentralized POMDP
\[
(\mcX, \mcY, \mcU, \mcV, q, r, (W_n, n \ge 0), (X_0,Y_0)).
\]

\subsection{An upper bound on performance
with classical strategies
}   \label{sec:classical}

A simple coupling argument establishes that
for the decentralized 
POMDP
under consideration
we will have
\begin{equation}        \label{eq:upperbound}
\limsup_{N \to \infty}
\frac{1}{N} \sum_{n=0}^{N-1} 
E[ r(X_n, Y_n, U_n, V_n)] \le 1 - 2 \delta,
\end{equation}
for {\em all} (classical) control strategies 
(even in the presence of an arbitrary amount of common randomness between Alice and Bob, as in our formulation
of the control problem). This argument depends on the 
assumption made on the transition probabilities in 
equation \eqref{eq:deltabound}.

In our formulation a strategy is given by 
the family $(u_n(X_{0:n}, W_{0:n}), n \ge 0)$ determining the action of Alice at each time and 
the family $(v_n(Y_{0:n}, W_{0:n}), n \ge 0)$ determining the action of Bob at each time.
Let us relax the notion of a strategy to allow each 
player to have access to the past observations of 
the other player. Thus, we now consider control strategies of the
form 
$\tilU_n := \tilu_n(X_{0:n}, Y_{0:n-1}, W_{0:n}), n \ge 0$, 
giving the action of Alice at each time and 
$\tilV_n := \tilv_n(X_{0:n-1}, Y_{0:n}, W_{0:n}), n \ge 0$, giving the action of Bob at each time. Clearly the performance objective achievable by the players with relaxed strategies
of this kind is can be no worse that achievable with 
strategies as originally defined. 
Therefore if we prove that 
\begin{equation}        \label{eq:upperboundrelaxed}
\limsup_{N \to \infty}
\frac{1}{N} \sum_{n=0}^{N-1} 
E[ r(X_n, Y_n, \tilU_n, \tilV_n)] \le 1 - 2 \delta,
\end{equation}
holds for all relaxed control strategies of Alice and Bob, then we will have proved that the inequality in eqn. \eqref{eq:upperbound}
holds for all control strategies of Alice and Bob.

To prove the inequality in eqn. \eqref{eq:upperboundrelaxed}
holds, it suffices to prove that for each $n \ge 0$
we have 
\[
E[ r(X_n, Y_n, \tilU_n, \tilV_n)] \le 1 - 2 \delta.
\]
This is an immediate consequence of Corollary \ref{cor:onenegCR} in Appendix \ref{app:aux}.
To see this write
\begin{equation}
\aligned
E[ r(X_n, Y_n, \tilU_n, \tilV_n)]
&=
E[ r(X_n, Y_n, \tilu_n(X_{0:n}, Y_{0:n-1}, W_{0:n}), \tilv_n(X_{0:n-1}, Y_{0:n}, W_{0:n}))]\\
&=
E[ E[r(X_n, Y_n, \tilu_n(X_{0:n}, Y_{0:n-1}, W_{0:n}), \tilv_n(X_{0:n-1}, Y_{0:n}, W_{0:n}))|\\
&~~~~~~~~~~~~~~~~~~~~~~~~~~~~~~~~~~~~~~~~~X_{0:n-1}, Y_{0:n-1},
W_{0:n}]]\\
&\stackrel{(a)}{=}
E[ E[r(X_n, Y_n, \tilu_n(X_n, Z_n), \tilv_n(Y_n, Z_n))|Z_n]]\\
&\stackrel{(b)}{=}
E[E[ \tilu_n^{(Y_n)}(X_n, Z_n) \tilv_n^{(X_n)}(Y_n, Z_n)| Z_n]]\\
&\stackrel{(c)}{\le}
1 -2 \delta.
\endaligned
\end{equation}
Here in step (a) we have used the notation $Z_n$ 
for the triple $(X_{0:n-1}, Y_{0:n-1}, W_{0:n})$;
in step (b) we have used the definition in 
eqn. \eqref{eq:rewards} for the reward function in the
example under consideration; and in step (c) we
have used Corollary \ref{cor:onenegcor}
in Appendix \ref{app:aux}, which tells us that
\[
P(\tilu_n^{(Y_n)}(X_n, Z_n) \tilv_n^{(X_n)}(Y_n, Z_n) = -1| Z_n)
\ge \delta.
\]
This concludes the proof of an upper bound on the achievable 
performance with classical control strategies,
even in the presence of an arbitrary amount of common randomness between Alice and Bob, in the example under consideration.

\subsection{Achieving quantum advantage 
with the Mermin-Peres square}   \label{sec:mermin-peres}

This section will use language that is standard in the
study of quantum mechanics and, more specifically, quantum information.
For an introduction to the basics of quantum information
and the phenomenon of quantum entanglement in product 
quantum systems, see Appendix \ref{app:quantum}.
Further, this section will refer to the Mermin-Peres square,
which is discussed in Appendix \ref{app:mpsquare}.

Consider now the decentralized POMDP of our example, 
but assume that at each time $n \ge 0$ Alice and Bob are
provided with two pairs of entangled qubits, denoted
$\rho_n(1)$ and $\rho_n(2)$. More specifically, each 
$\rho_n(m)$, for $n \ge 0$ and $m \in \{1,2\}$, is of the
form
\[
\rho_n(m) = \frac{1}{\sqrt{2}} \ket{00} + \frac{1}{\sqrt{2}} \ket{11} \in \mbbC^2 \otimes \mbbC^2,
\]
and Alice is provided with the first component, while
Bob is provided with the second component. 
All the entangled pairs of qubits are assumed to be independent. 

To demonstrate quantum advantage in our example, it is 
not necessary for us to engage with the most general
definition of strategies for Alice and Bob in this context
(which would in general allow a measurement to be carried
out at each time $n$ by Alice, based on the 
common randomness received up to that time and her
observations up to that time, on the portion of the 
system, comprised of the first components of each pair
of qubits received at each time from $0$ through $n$, 
and then act based on the outcome of this measurement;
note that some of those qubits might have already been measured in the past, and so their state might have changed based on what the outcomes of the measurements were in the past; and similarly for Bob). Rather, it suffices to restrict attention to a class of strategies for each agent that are easier to discuss: at each time Alice just measures the system comprised of the first components of the two fresh qubits received at that time and then 
acts based on the result of this measurement and 
her observations so far and the common randomness received
so far; and similarly for Bob).

To be even more specific, we will simply consider strategies of this more restricted kind
for Alice and Bob that are based on the Mermin-Peres square, which is discussed in Appendix
\ref{app:mpsquare}. 

At time $n \ge 0$ Alice ignores the
common randomness and her past observations $X_{0:n-1}$.
For $1 \le i \le 3$, if $X_n = i$ she carries out the measurements on the pair of qubits corresponding to the
first components of the entangled pairs of qubits
$\rho_n(1)$ and $\rho_n(2)$ (which she has access to)
as described by the $i$-th row of the Mermin-Peres square.
As discussed in Appendix \ref{app:mpgameandsquare}
the resulting outcomes will be in $\{1, -1\}$ and will
not depend on the order in which these measurements are carried out. For $1 \le l \le 3$, Alice 
chooses the $l$-th component of $U_n(X_n)$, i.e. $U_n^{(l)}(X_n)$, to be the 
result of the measurement corresponding to the
$(X_n,l)$ entry of the Mermin-Peres square.

Similarly, at time $n \ge 0$ Bob ignores the
common randomness and his past observations $Y_{0:n-1}$.
For $1 \le j \le 3$, if $Y_n = j$ he carries out the measurements on the pair of qubits corresponding to the
second components of the entangled pairs of qubits
$\rho_n(1)$ and $\rho_n(2)$ (which he has access to)
as described by the $j$-th column of the Mermin-Peres square.
As discussed in Appendix \ref{app:mpgameandsquare}
the resulting outcomes will be in $\{1, -1\}$ and will
not depend on the order in which these measurements are carried out. For $1 \le k \le 3$, Bob
chooses the $k$-th component of $V_n(Y_n)$, i.e. $V_n^{(k)}(Y_n)$,  to be the 
result of the measurement corresponding to the
$(k,Y_n)$ entry of the Mermin-Peres square.

As argued in Appendix \ref{app:mpgameandsquare}, 
this has the amazing consequence that 
\[
U_n^{(Y_n)}(X_n) V_n^{(X_n)}(Y_n) = 1,
\]
pointwise. Hence we will have 
\[
E[ r(X_n, Y_n, U_n, V_n)] = E[U_n^{(Y_n)}(X_n) V_n^{(X_n)}(Y_n)] = 1,
\]
so that, for this strategy aided by quantum entanglement we have
\[
\lim_{N \to \infty}
\frac{1}{N} \sum_{n=0}^{N-1} 
E[ r(X_n, Y_n, U_n, V_n)] = 1.
\]

Since $1 > 1 - 2 \delta$, this establishes the existence of a quantum advantage in 
the decentralized control of POMDPs, which was the main point of writing this paper.

\section{An example where one-shot quantum advantage exists but dynamical quantum advantage does not}   \label{sec:staticvsdynamic}

We give an example to emphasize that the existence of a quantum advantage 
at the static (one-shot) level does not imply that there is a quantum
advantage at the dynamic level. Let us first define formally what we mean by this statement.
Consider a decentralized POMDP defined by
\[
(\mcX, \mcY, \mcU, \mcV, q, r, (W_n, n \ge 0), (X_0,Y_0)),
\]
as in Section \ref{sec:generalmodel},
and where the strategies
$(u_n, n \ge 0)$ and $(v_n, n \ge 0)$ are
chosen by Alice and Bob respectively 
as in eqns. \eqref{eq:AliceU} and \eqref{eq:BobV}
respectively, 
in order to maximize
the performance objective given in eqn. \eqref{eq:performance}. 

We will say that there is no quantum advantage at the
dynamical level if the supremum of
\[
\liminf_{N \to \infty}
\frac{1}{N} \sum_{n=0}^{N-1} 
E[ r(X_n, Y_n, U_n, V_n)]
\]
over all classical strategies is the same as that over all strategies
for Alice and Bob where they are also provided with 
quantum entanglement at a fixed rate. We will say that
there is a static quantum advantage if there is an
initial probability distribution for $(X_0,Y_0)$
such that the supremum of 
\[
E[r(X_0,Y_0,U_0,V_0)]
\]
over all classical strategies (given by $U_0 = u_0(X_0,W_0)$
for Alice and $V_0 = v_0(Y_0,W_0)$ for Bob) 
is strictly smaller than this supremum when, in addition,
Alice and Bob are provided with quantum entanglement. 

With this formalism in mind, consider the following 
example to establish our claim. Once again, as in 
Section \ref{sec:examplemodel}, we have
$\mcX = \mcY = \{1,2,3\}$. We again have
\[
\mcU = \{u = (u^{1)}, u^{(2)}, u^{(3)}) : u^{(l)} \in \{1, -1\} \mbox{ for all $l \in \{1, 2, 3\}$ and $\prod_{l=1}^3 u^{(l)} = 1$}\},
\]
and 
\[
\mcV = \{v = (v^{(1)}, v^{(2)}, v^{(3)}) : v^{(k)} \in \{1, -1\} \mbox{ for all $k \in \{1, 2, 3\}$ and $\prod_{k=1}^3 v^{(k)} = -1$}\}.
\]
Further, the reward function is given by 
$r(i,j,u,v) = u^{(j)} v^{(i)}$,
as in eqn. \eqref{eq:rewards}.
However, now the Markovian kernel
is given by 
\[
q(x_{n+1}, y_{n+1}| x_n, y_n, u_n, v_n) = 
1( (x_{n+1}, y_{n+1}) = \tau((x_n, y_n)),
\]
where 
\[
\tau: \mcX \times \mcY \mapsto  \mcX \times \mcY,
\]
defines a periodic walk through the
state space that visits each state exactly once before returning to the initial state, given by the sequence
\[
\to (1,1) \to (1,2) \to (2,3) \to (2,2) \to (3,3) \to (3,1) \to (1,3) \to (2,1) \to (3,2) \to (1,1) \to,
\]
i.e. $\tau((1,1)) = (1,2)$, $\tau((1,2)) = (2,3)$, etc.

It is not hard to see that, whatever the initial condition,
within two steps each agent becomes aware not only of its
own observations but also of those of the other agent. 
Namely, for all $n \ge 2$ we have that $Y_{0:n}$ is a deterministic function of $X_{0:n}$ and vice versa.
This means that the following classical strategies 
can be implemented by the two agents for $n \ge 2$.
Alice ignores any available common randomness and 
chooses $U_n$, based on $X_n$, which she knows, such that $U_n^{(Y_n)}(X_n) = 1$ (this is possible
because, as we just argued, Alice also knows $Y_n$ when 
$n \ge 2$) and she chooses $U_n^{(l)}(X_n)$
for $1 \le l \neq Y_n \le 3$ in some way such that the constraint 
$\prod_{l=1}^3 U_n^{(l)}(X_n) = 1$ is satisfied.
Similarly Bob ignores any available common
randomness and chooses $V_n$, based on $Y_n$, which he knows, such that $V_n^{(X_n)}(Y_n) = 1$,
and $V_n^{(k)}(Y_n)$
for $1 \le k \neq X_n \le 3$ such that the constraint 
$\prod_{k=1}^3 V_n^{(k)}(Y_n) = -1$ is satisfied.
This is possible for $n \ge 2$ because Bob also knows $X_n$.
One then has 
\[
E[ r(X_n, Y_n, U_n, V_n)] = E[ U_n^{(Y_n)}(X_n) V_n^{(X_n)}(Y_n)] = 1,
\]
for all $n \ge 2$ and so, with this classical strategy,
we have
\[
\lim_{N \to \infty}
\frac{1}{N} \sum_{n=0}^{N-1} 
E[ r(X_n, Y_n, U_n, V_n)] =1.
\]
There can be no dynamical quantum advantage, since 
it is impossible to beat a long term average reward of 
$1$ given that the reward at each time is pointwise bounded
by $1$.

On the other hand, from our earlier discussion, we can 
conclude that one-shot quantum advantage exists in this
example. Indeed, suppose that the initial distribution 
of the state is uniform over all the nine possibilities.
Then, from Corollary \ref{cor:onenegCR} in Appendix
\ref{app:aux} we can conclude that no classical strategy 
can achieve an expected reward of more than $\frac{7}{9}$
(this was also discussed in detail in Appendix 
\ref{app:mpgame}). But, as seen in Appendix 
\ref{app:mpgameandsquare}, if Alice and Bob are 
provided with two pairs of entagled qubits, each 
in the state $\frac{1}{\sqrt{2}} \ket{00} +
\frac{1}{\sqrt{2}} \ket{11}$, the two pairs being independent, with Alice being provided with the first
coordinate of each pair and Bob being provided with the
second coordinate of each pair, then they can each carry
out measurements as prescribed the appropriate row (for Alice) and column (for Bob) of the Mermin-Peres square, and
can thereby achieve a one-shot reward of $1$, which is strictly bigger than $\frac{7}{9}$.

\section{Concluding remarks}        \label{sec:conclusions}

We have demonstrated via an example that the provision of
quantum entanglement at a fixed rate to two agents who are
working together to maximize a long term average reward 
criterion in a partially-observed Markov decision scenario can lead to a strict improvement in performance, i.e. a quantum advantage. The argument to show this builds on a 
well-known and astonishing example in the theory of 
quantum information, called the Mermin-Peres square. 
While
quantum advantage is already known to exist in static team problems, in Section \ref{sec:staticvsdynamic}
we have given an example suggesting that it may be too
facile to take for granted that the existence of a quantum advantage in 
static problems implies its existence in dynamical scenarios.

This work suggests the
investigation of what seems to be a central question:
for which decentralized POMDP
\[
(\mcX, \mcY, \mcU, \mcV, q, r, (W_n, n \ge 0), (X_0,Y_0))
\]
do we have quantum advantage and for which ones do we not?
To address this question in the case where quantum 
entanglement is provided to the two agents at a fixed rate,
one should ideally work with the most general notion of adapted control strategies for the two players in the presence of quantum entanglement, which allows for repeated measurement of previously measured quantum systems.

\section*{Acknowledgements}

This research was supported by the grants
CCF-1901004 and CIF-2007965 from the U.S. National
Science Foundation. We would like to thank Ankur Kulkarni for suggesting that the paper would be stronger if one could construct an example making the point that dynamical quantum advantage may not exist even when there is a one-shot quantum advantage.

\appendix 

\section{Some auxiliary results} \label{app:aux}

In this appendix we gather some auxiliary results that are used in the main discussion.

\begin{lemma}   \label{lem:oneneg}
Let $u = (u^{(1)},u^{(2)},u^{(3)})$ and 
$v = (v^{(1)},v^{(2)},v^{(3)})$ have entries in 
$\{1, -1\}$ and satisfy
$\prod_{l=1}^3 u^{(l)} = 1$
and 
$\prod_{k=1}^3 v^{(k)} = -1$.
Then there is at least
one choice of a pair of indices $(i,j)$
with $i, j \in \{1,2,3\}$ such that 
$u^{(j)}v^{(i)} = -1$. 
\end{lemma}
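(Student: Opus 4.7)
My plan is to argue by contradiction, exploiting the fact that every entry is $\pm 1$, so $u^{(j)} v^{(i)} = -1$ is equivalent to $u^{(j)} \neq v^{(i)}$.

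Suppose the conclusion fails, so that $u^{(j)} v^{(i)} = 1$ for every pair $(i,j) \in \{1,2,3\}^2$. Because each entry lies in $\{1,-1\}$, this is equivalent to the statement that $u^{(j)} = v^{(i)}$ for all $i,j$. Fixing any $i$ and varying $j$ forces $u^{(1)} = u^{(2)} = u^{(3)}$; fixing any $j$ and varying $i$ forces $v^{(1)} = v^{(2)} = v^{(3)}$; and combining these shows all six numbers coincide with a common value $c \in \{1,-1\}$.

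From the product constraints we then read off $1 = \prod_{l=1}^3 u^{(l)} = c^3 = c$ and simultaneously $-1 = \prod_{k=1}^3 v^{(k)} = c^3 = c$, which is impossible. This contradiction establishes the existence of the desired pair $(i,j)$.

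The argument is essentially a one-line parity calculation, so there is no significant obstacle; the only care needed is the elementary observation that in $\{1,-1\}$ the condition $u^{(j)} v^{(i)} = 1$ is the same as $u^{(j)} = v^{(i)}$, which turns the hypothesis into a rigid structural constraint that immediately collides with the opposite signs of the two products.
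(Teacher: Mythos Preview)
Your proof is correct and follows essentially the same approach as the paper: both argue by contradiction from the assumption that $u^{(j)} v^{(i)} = 1$ for all $(i,j)$ and derive a clash with $\prod_l u^{(l)} = 1$ and $\prod_k v^{(k)} = -1$. The only cosmetic difference is that the paper multiplies all nine terms together to obtain $(\prod_l u^{(l)})(\prod_k v^{(k)}) = 1$, while you first observe that the equalities force a common value $c$ and then read off $c = 1$ and $c = -1$; these are two packagings of the same parity contradiction.
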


\begin{proof}
 Note that 
$u^{(j)}v^{(i)} \in \{1, -1\}$
for all $i, j \in \{1,2,3\}$. 
Hence suppose, to the contrary, that we have
$u^{(j)}v^{(i)} = 1$
for all $i, j \in \{1,2,3\}$.
It follows that 
$(\prod_{l=1}^3 u^{(l)} )(\prod_{k=1}^3 v^{(k)}) = 1$.
But this is 
false. This concludes the proof.
\end{proof}

\begin{remark}
Clearly the conclusion of Lemma 
\ref{lem:oneneg} 
can be strengthened. However, our 
overall aim
is just to make a qualitative point about decentralized control, so we do not attempt to optimize lemma statements in unnecessary ways. \hfill $\Box$
\end{remark}

The following result can be viewed as a corollary of Lemma \ref{lem:oneneg}, since it has a similar proof.

\begin{corollary}       \label{cor:onenegcor}
Fix $\delta > 0$.
Let $(X,Y) \in \{1,2,3\} \times \{1,2,3\}$ be a
pair of random variables with 
$P((X,Y) = (i,j)) \ge \delta$ for all
$i, j \in \{1,2,3\}$. Let $u(X)$ and $v(Y)$ be as
in the statement of Lemma \ref{lem:oneneg}, 
i.e. we have $\prod_{l=1}^3 u^{(l)}(X) = 1$ and 
$\prod_{k=1}^3 v^{(k)}(Y) = -1$ pointwise. Then we have
$P(u^{(Y)}(X) v^{(X)}(Y) = -1) \ge \delta$.
\end{corollary}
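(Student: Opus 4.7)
The plan is to reduce the statement to a deterministic combinatorial claim about the $3 \times 3$ array of sign products, then combine that claim with the lower bound on the joint distribution.

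More concretely, define for each pair $(i,j) \in \{1,2,3\}^2$ the sign $a_{ij} := u^{(j)}(i) \, v^{(i)}(j) \in \{1,-1\}$. The key sub-claim I would prove is that there exists at least one pair $(i^*, j^*)$ for which $a_{i^*j^*} = -1$. Once that is in hand, the event $\{X = i^*, Y = j^*\}$ is contained in the event $\{u^{(Y)}(X)\, v^{(X)}(Y) = -1\}$, and by hypothesis this single-atom event already has probability at least $\delta$, which gives the conclusion immediately.

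To establish the sub-claim I would argue by contradiction, exactly in the spirit of Lemma \ref{lem:oneneg}. Suppose $a_{ij} = 1$ for every $(i,j)$. Then $\prod_{i,j} a_{ij} = 1$. But that product factors as
\[
\prod_{i=1}^3 \prod_{j=1}^3 u^{(j)}(i) \; \cdot \; \prod_{j=1}^3 \prod_{i=1}^3 v^{(i)}(j)
= \prod_{i=1}^3 \Bigl(\prod_{j=1}^3 u^{(j)}(i)\Bigr) \; \cdot \; \prod_{j=1}^3 \Bigl(\prod_{i=1}^3 v^{(i)}(j)\Bigr).
\]
Using the pointwise constraint $\prod_{j=1}^3 u^{(j)}(i) = 1$ for each $i$, the first triple product is $1$; using $\prod_{i=1}^3 v^{(i)}(j) = -1$ for each $j$, the second triple product is $(-1)^3 = -1$. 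So the whole product is $-1$, contradicting $\prod_{i,j} a_{ij} = 1$.

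There is no serious obstacle here: the argument is essentially a restatement of Lemma \ref{lem:oneneg} with the roles of the two constraint sets spread across the $3 \times 3$ grid, plus a one-line observation that $P(X=i^*, Y=j^*) \ge \delta$ by assumption. If anything, the only thing to be careful about is not to confuse ``there exist indices $(i,j)$ such that $u^{(j)}(i) v^{(i)}(j) = -1$'' (the statement actually needed) with the weaker ``for each fixed $x, y$, the vectors $u(x), v(y)$ admit \emph{some} pair of indices producing $-1$'' (which is the literal content of Lemma \ref{lem:oneneg}); the factorization of $\prod_{i,j} a_{ij}$ above is exactly what bridges the gap.
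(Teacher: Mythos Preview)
Your proposal is correct and is essentially the same argument as the paper's: both hinge on the factorization $\prod_{i,j} u^{(j)}(i)\,v^{(i)}(j) = \bigl(\prod_i \prod_l u^{(l)}(i)\bigr)\bigl(\prod_j \prod_k v^{(k)}(j)\bigr) = -1$ to guarantee at least one $(i^*,j^*)$ with $a_{i^*j^*}=-1$, and then invoke the $\delta$-lower bound on $P((X,Y)=(i^*,j^*))$. The paper packages this as a single chain of inequalities $P(\cdot) = \sum_{i,j} P((X,Y)=(i,j))\,1(a_{ij}=-1) \ge \delta \sum_{i,j} 1(a_{ij}=-1) \ge \delta$, while you isolate the combinatorial sub-claim first, but the content is identical; your closing remark about why Lemma~\ref{lem:oneneg} does not literally apply and the grid factorization is needed is exactly on point.
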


\begin{proof}
We have
\begin{equation*}
\aligned
P(u^{(Y)}(X) v^{(X)}(Y) = -1)  &= 
\sum_{i=1}^3 \sum_{j=1}^3 P((X,Y) = (i,j)) 1(u^{(j)}(i) v^{(i)}(j) = -1)\\
&\ge \delta \sum_{i=1}^3 \sum_{j=1}^3 1(u^{(j)}(i) v^{(i)}(j) = -1)\\
&\ge \delta,
\endaligned
\end{equation*}
where for the last step we observe that if we were to have 
$u^{(j)}(i) v^{(i)}(j)$ equal to $1$ for all $(i,j)$ then we would have
\begin{equation*}
\prod_{i=1}^3 \prod_{j=1}^3 u^{(j)}(i) v^{(i)}(j) = 
\left( \prod_{i=1}^3 \prod_{l=1}^3 u^{(l)}(i) \right) 
\left( \prod_{j=1}^3 \prod_{k=1}^3 v^{(k)}(j) \right) = 1,
\end{equation*}
which is false, because the term in the middle should be $-1$.
\end{proof}

We also have the following corollary, which can be viewed as a version of Corollary \ref{cor:onenegcor} where there is common randomness between the agents creating the components $u$ and $v$ of Lemma \ref{lem:oneneg} from the respective indices in the pair $(X,Y)$. 
The proof is similar to that of Corollary \ref{cor:onenegcor}
and will be omitted.

\begin{corollary}       \label{cor:onenegCR}
Fix $\delta > 0$. Let $\mcZ$ be an arbitrary 
Polish space.
Let $(X,Y,Z) \in \{1,2,3\} \times \{1,2,3\} \times \mcZ$ be a random triple with 
$P((X,Y) = (i,j)|Z) \ge \delta$ almost surely, for all
$i, j \in \{1,2,3\}$. 
Let 
$u(X,Z) = (u^{(1)}(X,Z), u^{(2)}(X,Z), u^{(3)}(X,Z))$ and 
$v(Y,Z) = (v^{(1)}(Y,Z), v^{(2)}(Y,Z), v^{(3)}(Y,Z))$ 
be measurable, with each coordinate being a
$\{1, -1\}$-valued  function, and such that 
$\prod_{l=1}^3 u^{(l)}(X,Z) = 1$ 
and 
$\prod_{k=1}^3 v^{(k)}(Y,Z) = 1$
almost surely.
Then we have 
\begin{equation}
P(u^{(Y)}(X,Z) v^{(X)}(Y,Z) = -1|Z) \ge \delta.
\end{equation}
\hfill $\Box$
\end{corollary}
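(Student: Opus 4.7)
As literally displayed, the claim cannot hold. Taking $u^{(l)}(x,z) \equiv 1$ and $v^{(k)}(y,z) \equiv 1$ satisfies both product hypotheses $\prod_{l=1}^3 u^{(l)}(X,Z) = 1$ and $\prod_{k=1}^3 v^{(k)}(Y,Z) = 1$ almost surely, yet makes $u^{(Y)}(X,Z)\,v^{(X)}(Y,Z) \equiv 1$, forcing $P(u^{(Y)} v^{(X)} = -1 \mid Z) = 0 < \delta$. Given Lemma~\ref{lem:oneneg} and Corollary~\ref{cor:onenegcor} (both stated with $\prod_k v^{(k)} = -1$), the remark preceding the corollary that ``the proof is similar to that of Corollary~\ref{cor:onenegcor},'' and the way the corollary is invoked in Section~\ref{sec:classical} precisely in the $v$-product-equals-$-1$ setting, the intended second hypothesis is clearly $\prod_{k=1}^3 v^{(k)}(Y,Z) = -1$ almost surely. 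My plan is to flag this typographical slip and then sketch the proof of the statement as intended.

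The strategy mirrors the proof of Corollary~\ref{cor:onenegcor} with $Z$ as a parameter. The hypothesis $P((X,Y)=(i,j)\mid Z) \ge \delta$ a.s.\ gives $P(X=i\mid Z) \ge 3\delta$ and $P(Y=j\mid Z) \ge 3\delta$ a.s.\ for each $i, j \in \{1,2,3\}$. The a.s.\ identities $\prod_l u^{(l)}(X,Z)=1$ and $\prod_k v^{(k)}(Y,Z)=-1$ therefore upgrade, after conditioning on $Z$ and using that every value of $X$ (respectively $Y$) carries positive conditional mass, to the pointwise identities $\prod_l u^{(l)}(i,Z) = 1$ and $\prod_k v^{(k)}(j,Z) = -1$ for every $i, j \in \{1,2,3\}$ on a $Z$-event of full measure. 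This is a routine Fubini/disintegration step; the Polish assumption on $\mcZ$ is used only here, to guarantee the existence of regular conditional distributions.

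On that full-measure event, for each realization $z$ I apply the parity argument underlying Lemma~\ref{lem:oneneg}: the product of the nine signs $u^{(j)}(i,z)\,v^{(i)}(j,z)$ telescopes as
\begin{equation*}
\prod_{i=1}^3 \prod_{j=1}^3 u^{(j)}(i,z)\,v^{(i)}(j,z) = \Bigl(\prod_{i=1}^3 \prod_{l=1}^3 u^{(l)}(i,z)\Bigr)\Bigl(\prod_{j=1}^3 \prod_{k=1}^3 v^{(k)}(j,z)\Bigr) = 1^3 \cdot (-1)^3 = -1,
\end{equation*}
so at least one pair $(i_0(z), j_0(z))$ satisfies $u^{(j_0(z))}(i_0(z),z)\,v^{(i_0(z))}(j_0(z),z) = -1$. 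Bounding the conditional probability of the target event by that of the single atom $(X,Y) = (i_0(z), j_0(z))$ yields
\begin{equation*}
P\bigl(u^{(Y)}(X,Z)\,v^{(X)}(Y,Z) = -1 \,\big|\, Z = z\bigr) \ge P\bigl((X,Y) = (i_0(z), j_0(z)) \,\big|\, Z = z\bigr) \ge \delta,
\end{equation*}
which is the desired bound.

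The only step beyond Corollary~\ref{cor:onenegcor} that warrants mild care is obtaining a \emph{measurable} selector $z \mapsto (i_0(z), j_0(z))$ so that the final display is an honest almost-sure statement about $Z$. Because the sign pattern $(u^{(j)}(i,Z)\,v^{(i)}(j,Z))_{i,j}$ takes values in the finite set $\{1,-1\}^9$, partitioning $\mcZ$ into the finitely many $Z$-measurable cells on which the pattern is constant and choosing a canonical $(i_0, j_0)$ on each cell provides such a selector. I expect this bookkeeping to be the only non-routine element; the core of the argument is the telescoping product identity combined with the lower bound on the conditional atom masses.
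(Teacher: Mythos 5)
Your proposal is correct: the sign in the hypothesis on $v$ is indeed a typographical slip (the corollary is invoked in Section \ref{sec:classical} exactly in the $\prod_{k=1}^3 v^{(k)}(Y,Z) = -1$ setting, and with $+1$ your constant-functions counterexample refutes the literal statement), and your argument for the corrected statement is essentially the paper's own, whose proof is omitted with the remark that it is similar to that of Corollary \ref{cor:onenegcor}. One small simplification: writing $P(u^{(Y)}(X,Z)\,v^{(X)}(Y,Z) = -1 \mid Z) = \sum_{i=1}^3 \sum_{j=1}^3 P((X,Y)=(i,j)\mid Z)\, 1(u^{(j)}(i,Z)\,v^{(i)}(j,Z) = -1)$ as in Corollary \ref{cor:onenegcor} and lower-bounding each term by $\delta$ times the indicator makes your measurable-selector bookkeeping unnecessary, since each indicator is already a measurable function of $Z$ and your parity argument shows their sum is at least $1$ almost surely.
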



\section{Quantum information}        \label{app:quantum}

We will focus only on what
is needed to formalize the notion of quantum entaglement
between a pair of qubits, since this suffices 
to discuss the Mermin-Peres
square. For a more thorough introduction to the basics of 
quantum information we refer the reader to the textbooks \cite{Holevo12} and \cite{Watrous18}.

\subsection{A single quantum system}    \label{app:singlequant}

As usual, $\mbbC^{n \times n}$ denotes the set of 
$n \times n$ matrices with complex entries, where $n \ge 1$.
Let $L_n$ denote the set of all linear mappings from 
$\mbbC^n$ to $\mbbC^n$, which we identify with 
$\mbbC^{n \times n}$ via the choice of the standard orthonormal basis in $\mbbC^n$.
We write $\Tr(M)$ for the trace of
the linear mapping $M \in L_n$.
For a vector $v \in \mbbC^n$ 
(thought of as a column vector), 
we write $v^*$ for 
its complex conjugate transpose,
and for any 
$M \in L_n$
we write $M^*$ for its 
complex conjugate transpose, 
these conventions being consistent for $n=1$.

Let 
$D_n \subseteq L_n$
denote the subset of positive-semidefinite 
matrices 
with trace $1$. Elements of
$D_n$ are called {\em density matrices}. The state of a quantum system
is described by a density matrix. Every density matrix is 
Hermitian, since this is part of what it means to be positive-semidefinite. We write 
$\Pos_n \subseteq L_n$
for the subset of positive-semidefinite matrices, so 
$D_n \subseteq \Pos_n \subseteq L_n$. 
Vectors of norm $1$ in $\mbbC^n$
correspond to the {\em pure states} of the quantum
system: the vector $v \in \mbbC^n$ corresponds
to the pure state $v v^*$ where $v^*$ denotes
the complex conjugate transpose of $v$. 
We will say that the quantum system is of dimension $n$ when its states are described by 
density matrices in $D_n$.

As an example, let $n =2$. The corresponding quantum system
is called a {\em qubit}.
Using Dirac notation we write
$\ket{0}$ and $\ket{1}$ for the vectors of the standard orthonormal
basis in $\mbbC^2$. Any element of $D_2$ (which one can 
identify with a positive-semidefinite matrix in $C^{2 \times 2}$)
is a state for the qubit. For instance, the state
$\begin{bmatrix} 1 & 0 \\ 0 & 0 \end{bmatrix}$ is the pure
state corresponding to the vector $\ket{0}$, 
$\begin{bmatrix} 0 & 0 \\ 0 & 1 \end{bmatrix}$ is the pure
state corresponding to the vector $\ket{1}$, and 
$\begin{bmatrix} \frac{1}{2} & \frac{1}{2} \\ \frac{1}{2} & \frac{1}{2} \end{bmatrix}$ is the pure
state corresponding to the vector $\frac{1}{\sqrt{2}} \ket{0}
+ \frac{1}{\sqrt{2}} \ket{1}$. The state 
$\begin{bmatrix}  \frac{1}{4} & \frac{\imath}{4} \\ \frac{-\imath}{4} & \frac{3}{4} \end{bmatrix}$ is not pure.

\subsection{Measurements}       \label{app:measurement}

Let $\mcA$ be a finite set. 
By a {\em measurement} we mean a map of the 
form $\mu: \mcA \mapsto \Pos_n$, with the property that
$\sum_{a \in \mcA} \mu(a) = I$, where $I$ denotes
the identity 
mapping in $L_n$.
Such a measurement is also called a 
positive operator-valued measurement (POVM).
The basic ansatz of quantum mechanics is that
carrying out the measurement $\mu$ on a quantum 
system in state $\rho$ results in observing 
$a \in \mcA$ with the probability $\Tr(\mu(a) \rho)$.
Carrying out the measurement also results in a change of state, depending on which $a \in \mcA$ was observed and
indeed on how the measurement was implemented,
but this is of no interest to us in this paper, so we will not discuss it. The intuitive picture
that suffices for us corresponds to the case where
each $\mu(a)$ is a {\em projection}, i.e. when we have
$\mu(a)^2 = \mu(a)$ for all $a \in \mcA$.
Such a measurement is called a projection-valued measurement 
(PVM).
After carrying out a PVM $\mu$, if the outcome 
is $a \in \mcA$, the quantum system is left
in the state $\frac{\mu(a) \rho \mu(a)}{\Tr(\mu(a) \rho)}$.
Recalling that $\mu(a)$ is a projection, it can be checked that this expression defines a density matrix.

Assuming that $\mcA$ is a subset of $\mbbR$
(or identifying $\mcA$ with such a subset) 
the PVM $\mu$ gives rise to the
Hermitian matrix $\sum_{a \in \mcA} a \mu(a)$.
With this in mind, it is customary to think of
every Hermitian matrix
as giving rise to the PVM 
(with $\mcA$ a subset of $\mbbR$) defined by its spectral 
decomposition based on the eigenspaces corresponding to 
its distinct eigenvalues.
For example, the Hermitian matrix
\begin{equation}    \label{eq:sigmaxdecomp}
\sigma_x := \begin{bmatrix} 0 & 1 \\ 1 & 0 
\end{bmatrix}
=  \begin{bmatrix} \frac{1}{2} & \frac{1}{2} \\ \frac{1}{2} & 
\frac{1}{2}
\end{bmatrix} -
 \begin{bmatrix} \frac{1}{2} & - \frac{1}{2} \\ - \frac{1}{2} & 
\frac{1}{2}
\end{bmatrix}
\end{equation}
can be thought of as defining a PVM 
$\mu: \{1, -1\} \mapsto D_2$ on qubits, given by
\begin{equation}    \label{eq:sigmaxfactors}
\mu(1) = \begin{bmatrix} \frac{1}{2} & \frac{1}{2} \\ \frac{1}{2} & 
\frac{1}{2}
\end{bmatrix},~
\mu(-1) = 
\begin{bmatrix} \frac{1}{2} & - \frac{1}{2} \\ - \frac{1}{2} & 
\frac{1}{2}
\end{bmatrix}.
\end{equation}
When this measurement is carried out
on the qubit in state $\rho := \begin{bmatrix}  \frac{1}{4} & \frac{\imath}{4} \\ \frac{-\imath}{4} & \frac{3}{4} \end{bmatrix}$
it results in the observing $1$ with probability 
$\Tr( \mu(1) \rho) = \frac{1}{2}$ and observing 
$-1$ with probability 
$\Tr( \mu(-1) \rho) = \frac{1}{2}$.
In this example, after the measurement the qubit is left in the
state $\mu(1)$ if the outcome is $1$ and in the state $\mu(-1)$
if the outcome is $-1$ (in general one needs to use the formula
$\frac{\mu(a) \rho \mu(a)}{\Tr(\mu(a) \rho)}$ to figure out the post-measurement state of a PVM; what happens in this example is special because each $\mu(a)$ is of rank $1$).

It can be checked that if two Hermitian matrices commute then, when each is viewed as a measurement, it does not matter in what
order the two measurements are performed in the sense that for either order of performing the measurements
the joint probability distribution of the pair of outcomes will be the same, and the state in which the system is left after the two measurements,
given the respective outcomes, is the same in both cases.

\subsection{Pauli matrices}       \label{app:Pauli}

This is a good point at which to introduce the Pauli matrices,
which are central to the understanding of the 
Mermin-Peres square. 
There are four Pauli matrices,
each of which is a Hermitian matrix in $\mbbC^{2 \times 2}$, 
namely
\[
\sigma_0 := I,~
\sigma_x := \begin{bmatrix} 0 & 1 \\ 1 & 0 
\end{bmatrix},~
\sigma_y = \begin{bmatrix} 0 & - \imath \\ \imath & 0 
\end{bmatrix},~
\sigma_z = \begin{bmatrix} 1 & 0 \\ 0 & -1 
\end{bmatrix}.
\]
It can be checked that these matrices obey the following multiplication rule:
\begin{center}
\begin{tabular}{|c|c|c|c|}
\hline
 & $\sigma_x$ & $\sigma_y$ & $\sigma_z$ \\
 \hline
 $\sigma_x$ & $\sigma_0$ & $\imath \sigma_z$ & $- \imath \sigma_y$ \\
 \hline
 $\sigma_y$ & $- \imath \sigma_z$ & $\sigma_0$ & $\imath \sigma_x$ \\
 \hline
 $\sigma_z$ & $\imath \sigma_y$ & $- \imath \sigma_x$ & $\sigma_0$ \\
 \hline
\end{tabular}
\end{center}
where the row labels are in the first column, the column labels
are in the first row, and each of the other entries represents
the multiplication of the row index followed by column index,
e.g. $\sigma_y \sigma_z = \imath \sigma_x$.

The Pauli matrix $\sigma_0$ has the unique eigenvalue $1$, while each of the other three
has eigenvalues $1$ and $-1$. Thus, when a Pauli matrix
is viewed as a measurement on a qubit, the observation will always be $1$ for $\sigma_0$
and will be either $1$ or $-1$ in each of the other three cases. The probability of the 
observation will depend on the state of the qubit being measured in each of the three
nontrivial cases, but it can be checked that
the post-measurement state of the qubit depends only on the observation and not on the 
pre-measurement state in each of these cases (on the other hand, the post-measurement 
state equals the pre-measurement state in case the measurement $\sigma_0$ is carried out).

\subsection{Products of quantum systems}       \label{app:product}

Given two quantum systems of dimensions
$m$ and $n$ respectively, the joint system is
of dimension $mn$. 
A state $\rho \in D_{mn}$ of the joint
system can be thought of as element of
$C^{mn \times mn}$ by the choice of the standard orthonormal
basis in $\mbbC^{mn}$.
Recall that the tensor product $\mbbC^n \otimes \mbbC^m$
can be identified with $\mbbC^{mn}$. 
Recall also that given $A \in L_n$ and
$B \in L_m$, their tensor product 
$A \otimes B$ can be viewed as an element of 
$L_{mn}$. In matrix terms, the
$((i,k), (j,l))$ entry of $A \otimes B$ is 
$a(i,j) b(k,l)$, where 
the entries of $A$ are denoted $a(i,j)$
those of $B$ are denoted
$b(k.l)$ and where in $A \otimes B$ the rows and columns
are listed in lexicographic order.
Of course, 
$A \otimes B \in L_n \otimes L_m$, but recall that
$L_n \otimes L_m$ is naturally identified with 
$L_{mn}$, because the notation $L_n \otimes L_m$
encompasses all linear combinations (with coefficients in 
$\mbbC$) of elements of the form $A \otimes B$ where
$A \in L_n$ and $B \in L_m$. 

Not every element of $L_{mn}$ can be expressed in the
form $A \otimes B$ where $A \in L_n$ and $B \in L_m$.
If the product system is in a state $\rho \in D_{mn}$ which can be written in the 
form $\rho_A \otimes \rho_B$ where $\rho_A \in D_n$ and
$\rho_B \in D_m$ then the component systems are said to be {\em independent} (in this overall state), and the state itself is called a {\em product state}.
It can be checked that the use of the term ``independent"
in this sense is consistent with its use in classical 
probability theory (i.e. when the states involved 
are diagonal matrices with nonnegative entries and trace $1$).


As an example of the kind of calculations needed to understand the Mermin-Peres square, 
consider the product of two qubit systems. This is a $4$-dimensional 
system, which can be described in matrix notation by the choice of the 
basis $\{ \ket{00}, \ket{01}, \ket{10}, \ket{11}\}$ for 
$\mbbC^2 \otimes \mbbC^2$,
where $\ket{ij}$ denotes $\ket{i} \otimes \ket{j}$ for $i,j \in \{0,1\}$.
As an example, the Hermitian matrix $\sigma_x \otimes \sigma_y$
can be thought of as a measurement on this product system (this measurement has two possible outcomes,
i.e. $1$ or $-1$). Similar to the way that we wrote
$\sigma_x = \mu(1) - \mu(-1)$ in the notation of
eqns. \eqref{eq:sigmaxdecomp} and \eqref{eq:sigmaxfactors}, we can write 
$\sigma_y = \nu(1) - \nu(-1)$, where 
\begin{equation}    \label{eq:sigmaydecomp}
\sigma_y := \begin{bmatrix} 0 & - \imath \\ \imath & 0 
\end{bmatrix}
=  \begin{bmatrix} \frac{1}{2} & - \frac{\imath}{2} \\ \frac{\imath}{2} & 
\frac{1}{2}
\end{bmatrix} -
 \begin{bmatrix} \frac{1}{2} & \frac{\imath}{2} \\ - \frac{\imath}{2} & 
\frac{1}{2}
\end{bmatrix}
\end{equation}
corresponding to PVM 
$\nu: \{1, -1\} \mapsto D_2$ on qubits, given by
\begin{equation}    \label{eq:sigmayfactors}
\nu(1) = \begin{bmatrix} \frac{1}{2} & - \frac{\imath}{2} \\ \frac{\imath}{2} & 
\frac{1}{2}
\end{bmatrix},~
\nu(-1) = 
\begin{bmatrix} \frac{1}{2} &  \frac{\imath}{2} \\ - \frac{\imath}{2} & 
\frac{1}{2}
\end{bmatrix}.
\end{equation}
Thus $\sigma_x \otimes \sigma_y$ can be thought of
as corresponding to the PVM
\[
\beta: \{1,-1\} \mapsto D_4
\]
given by 
\[
\beta(1) = \mu(1) \otimes \nu(1) + \mu(-1) \otimes \nu(-1) \mbox{ and }
\beta(-1) = \mu(1) \otimes \nu(-1) + \mu(-1) \otimes \nu(1).
\]
Suppose now that we carry out the measurement corresponding to $\sigma_x \otimes \sigma_y$ on the
pure state in $D_4$ coresponding to the vector
\[
\frac{1}{2} \ket{00} + \frac{1}{2} \ket{01} 
+ \frac{1}{2} \ket{10} + \frac{1}{2} \ket{11} 
= (\frac{1}{\sqrt{2}} \ket{0} + \frac{1}{\sqrt{2}} \ket{1}) \otimes 
(\frac{1}{\sqrt{2}} \ket{0} + \frac{1}{\sqrt{2}} \ket{1}) \in \mbbC^2 \otimes \mbbC^2 = \mbbC^4.
\]
We can compute that the
outcome of this measurement will be $1$ with probability
$\frac{1}{2}$, and will be $-1$ with probability
$\frac{1}{2}$.
Writing $u$ for $\frac{1}{\sqrt{2}} \ket{0} + \frac{1}{\sqrt{2}} \ket{1} \in \mbbC^2$, we can compute that,
conditioned on the outcome being $1$, the overall
$4$-dimensional system will end up in the 
pure state corresponding to the 
vector $u \otimes \left( \frac{1+\imath}{2} \ket{0} + \frac{1 - \imath}{2} \ket{1} \right)$, while conditioned on the outcome being $-1$ it will end up in the pure state corresponding to the vector 
$u \otimes \left( \frac{1-\imath}{2} \ket{0} + \frac{1 + \imath}{2} \ket{1} \right)$.

\subsection{Entanglement}       \label{app:entanglement}

We now discuss the concept of {\em entanglement}, 
which is the extraordinary feature of quantum information that
enables the magic of the Mermin-Peres square, and hence its consequences for strict improvement of 
performance in decentralized control as discussed in this paper.

We start with a simple fact about joint probability distributions.
Suppose $\mcX$ and $\mcY$ are finite sets and $(p(x,y), (x,y) \in \mcX \times \mcY)$ is a probability distribution on
$\mcX \times \mcY$. Then, for some $L \ge 1$, there exists a probability distribution $(q_l, 1 \le l \le L)$ and
probability distributions $(a_x^{(l)}, x \in \mcX)$ 
and 
probability distributions $(b_y^{(l)}, y \in \mcY)$ 
such that, for all $(x,y) \in \mcX \times \mcY$, we have
\[
p(x,y) = \sum_{l=1}^L q_l a_x^{(l)} b_y^{(l)}.
\]
Indeed, there is an obvious and simple way to accomplish this by taking $L = nm$ where
$n = |\mcX|$ and $m = |\mcY|$.

This simple fact can be phrased as follows: any joint probability distribution on $\mcX \times \mcY$ is a 
convex combination of product probability distributions. This can be interpreted as a property that 
every joint probability distribution on a ``product system" needs to satisfy in the world of classical probability 
distributions. Here we think of $\mcX \times \mcY$ as being the state space of a classical ``product system"
comprised of the individual classical ``component systems" having state spaces $\mcX$ and $\mcY$ respectively.

It is now natural to ask if, in the framework of quantum information,
it holds in general that any density matrix of the product system can be expressed as a convex combination of tensor products  of density matrices (i.e. as a convex combination of
density matrices in $D_{mn}$ which can each be 
written as a tensor product of a density matrix 
in $D_n$ with one in $D_m$). 
Any density matrix in $D_{mn}$ which 
admits of a representation as such a convex combination is called {\em separable}. Any density matrix in $D_{mn}$ that is 
not separable is called {\em entangled}. 
\footnote{ Note that the notion of separability is
not an intrinsic property of a density matrix of the 
product system when the product system is viewed as just a system. It only makes sense when the product system is viewed as a product system. Namely, we are not just discussing 
$\mbbC^{mn}$ as a complex vector space of dimension $nm$;
rather, we discussing it with its explicit product
structure in terms of its specified component systems
when $\mbbC^{mn}$ is identified with $\mbbC^n \otimes \mbbC^m$.
Thus the discussion of entanglement only makes sense 
in the context of the way we choose to think of the
product system as having been created from specified component systems. Indeed, a density matrix of a system can be entangled for some particular way of writing that system as a product system while being not entangled, i.e. separable, when it is thought of in terms of some other way of writing the system as a product system.}

\subsection{Existence of entanglement}    \label{app:entanglementexample}

The heart of the matter is that there are entangled density matrices (i.e. states) in product systems. 
For an example, which is the one used in the discussion of the Mermin-Peres square, let us take $n = m = 2$ (i.e. the component systems
are qubits). We will show that the density matrix
$vv^*$ corresponding to 
\[
v := \frac{1}{\sqrt{2}} \ket{00} + \frac{1}{\sqrt{2}} \ket{11}
\]
in the product system is entangled. 
Note that we have 
\[
vv^* = \begin{bmatrix}
\frac{1}{2} & 0 & 0 & \frac{1}{2}\\
0 & 0 & 0 & 0\\
0 & 0 & 0 & 0 \\
\frac{1}{2} & 0 & 0 & \frac{1}{2}
\end{bmatrix},
\]
and we want to show that it is impossible to write
\[
vv^* \stackrel{?}{=} \sum_{l=1}^L q_l \rho_A^{(l)} \otimes \rho_B^{(l)},
\]
where $(q_l, 1 \le l \le L)$ is a probability 
distribution \footnote{We can assume without loss of generality that all the $q_l$ are strictly positive.}
and where each $\rho_A^{(l)}$ is qubit density matrix 
and each $\rho_B^{(l)}$ is a qubit density matrix. 

Since every density matrix is a convex combination
of pure states, it is equivalent to show that it is
impossible to write
\[
vv^* \stackrel{?}{=} \sum_{m=1}^M r_m 
u_A^{(m)} (u_A^{(m)})^* \otimes v_B^{(m)} (v_B^{(m)})^*,
\]
where $(r_m, 1 \le m \le M)$ is a probability 
distribution \footnote{We can assume without loss of generality that all the $r_m$ are strictly positive.}
and the $u_A^{(m)}$ and $v_B^{(m)}$ are unit
vectors in $\mbbC^2$. 

Suppose this were possible. Write
\[
u_A^{(m)} = \alpha_0^{(m)} \ket{0} + \alpha_1^{(m)} \ket{1},
\]
and
\[
v_B^{(m)} = \beta_0^{(m)} \ket{0} + \beta_1^{(m)} \ket{1},
\]
where, for each $1 \le m \le M$,
the coefficients $\alpha_0^{(m)}, \alpha_1^{(m)},
\beta_0^{(m)}, \beta_1^{(m)}$ are complex numbers 
satisfying
\[
|\alpha_0^{(m)}|^2 + |\alpha_1^{(m)}|^2 =1 
\mbox{  and  }~~
|\beta_0^{(m)}|^2 + |\beta_1^{(m)}|^2 =1.
\]
Note that the 
$(\ket{01}, \ket{01})$
entry of
$vv^*$ is $0$, so we must have
\[
\sum_{m=1}^M r_m |\alpha_0^{(m)}|^2 |\beta_1^{(m)}|^2 = 0,
\]
from which it follows that for each $1 \le m \le M$
we either have $\alpha_0^{(m)} = 0$ or
$\beta_1^{(m)} = 0$ (or both). But the 
$(\ket{00}, \ket{11})$
entry of $vv^*$ needs to be $\frac{1}{2}$, and this
condition turns out to be the same as
\[
\sum_{m=1}^M r_m \alpha_0^{(m)} (\alpha_1^{(m)})^* \beta_0^{(m)} (\beta_1^{(m)})^* = \frac{1}{2}.
\]
This is a contradiction and so this establishes the claimed impossibilty. We have shown that the phenomenon of entanglement exists and, more 
specifically, that the pure state $v v^*$ corresponding to 
$v = \frac{1}{\sqrt{2}} \ket{00} + \frac{1}{\sqrt{2}} \ket{11}$ in the product of two qubit systems is entangled. This observation about entanglement is all that we need for the purposes of this paper.

\section{The Mermin-Peres square}        \label{app:mpsquare}

The Mermin-Peres square \cite[Sec. 3.2.2]{Holevo12} is the following $3 \times 3$ array:

\begin{center}
\begin{tabular}{|c|c|c|} 
\hline
$\sigma_0 \otimes \sigma_z$ & $\sigma_z \otimes \sigma_0$ & $\sigma_z \otimes \sigma_z$\\
\hline
$\sigma_x \otimes \sigma_0$ & $\sigma_0 \otimes \sigma_x$ & $\sigma_x \otimes \sigma_x$ \\
\hline
$- \sigma_x \otimes \sigma_z$ & $- \sigma_z \otimes \sigma_x$ & $\sigma_y \otimes \sigma_y$\\
\hline
\end{tabular}
\end{center}

Each entry is a Hermitian matrix in $\mbbC^2 \otimes \mbbC^2$, viewed as a measurement on states in $D_4$ (which is viewed as a 
subset of $L_2 \otimes L_2$). It can be checked that each of these Hermitian matrices has eigenvalues in $\{1, -1\}$.
It can be checked that in each row $i \in \{1, 2, 3\}$ the three such Hermitian matrices in the locations $(i,1), (i,2), (i,3)$
commute with each other, and in each column $j \in \{1, 2, 3\}$ the three such Hermitian matrices in the locations $(1,j), (2,j), (3,j)$
commute with each other.

\subsection{The Mermin-Peres game}    \label{app:mpgame}

The Mermin-Peres square reveals its magic in the so-called Mermin-Peres game \cite[Sec. 3.2.2]{Holevo12}.
The game is cooperative in the sense that either both Alice and Bob win or both Alice and Bob lose.
Let Alice be the row player and
Bob the column player. Alice and Bob receive indices $i$ and $j$ respectively, chosen independently and uniformly
over $i,j \in \{1, 2, 3\}$. Alice does not know Bob's index and Bob does not know Alice's index. Alice is required to place a number
$a_{il} \in \{1, -1\}$ in each column $l \in \{1,2,3\}$, and Bob is required to place a number
$b_{kj} \in \{1, -1\}$ in each row $k \in \{1,2,3\}$. The constraint on Alice is that $\prod_{l=1}^3 a_{il} = 1$, and the 
constraint on Bob is that $\prod_{k=1}^3 b_{kj} = -1$. Alice and Bob win if $a_{ij} b_{ij} = 1$. 

If one restricts oneself to classical strategies then, even with an arbitrary amount of common randomness between Alice and Bob
(this common randomness being independent of the choices of the
indices revealed to Alice and Bob respectively) the overall probability of winning has to be strictly less than $1$. This is because, whatever
the realization (based on the common randomness) of the strategies of Alice and Bob, we must have 
$\prod_{i=1}^3 \prod_{l=1}^3 a_{il} = 1$ and $\prod_{k=1}^3 \prod_{j=1}^3 b_{kj} = -1$. Thus it is impossible to 
have $a_{ij} b_{ij} = 1$ for each choice of $i,j \in \{1,2,3\}$, which would be necessary if winning were to occur with probability $1$. 
Indeed, there must be at least one pair $(i,j)$ for which we have $a_{ij} b_{ij} = -1$ on this realization; 
see Lemma \ref{lem:oneneg} in Appendix \ref{app:aux}
for a formal proof of this obvious fact. 
From this we can conclude
that with classical strategies Alice and Bob cannot manage an overall probability of winning of more than $\frac{8}{9}$.

\subsection{The Mermin-Peres square in the Mermin-Peres game}    \label{app:mpgameandsquare}

Now suppose Alice and Bob are provided with two pairs of entangled qubits. The first pair is in the 
product state 
\[
\rho(1) = \frac{1}{\sqrt{2}} \ket{00} + \frac{1}{\sqrt{2}} \ket{11} \in \mbbC^2 \otimes \mbbC^2.
\]
Here Alice is provided with the first component and 
Bob with the second component. The second pair is in the product state 
\[
\rho(2) = \frac{1}{\sqrt{2}} \ket{00} + \frac{1}{\sqrt{2}} \ket{11} \in \mbbC^2 \otimes \mbbC^2.
\]
Here also Alice is provided with the first component and 
Bob with the second component. The overall product state is 
\[
\rho(1) \otimes \rho(2) \in (\mbbC^2 \otimes \mbbC^2) \otimes (\mbbC^2 \otimes \mbbC^2),
\]
i.e. the two entangled qubit pairs are independent
(see Appendix \ref{app:product} for the definition of independence in this context).
Note that Alice has access to the first and third factors, while Bob has access to the second and fourth factors of this overall
quantum product state.

Consider now the following strategies for Alice and Bob. On receiving the row index $i$, Alice, for each $l \in \{1,2,3\}$,
carries out the measurement given by the $(i,l)$ entry of the Mermin-Peres square on her pair state (i.e. the pair qubit 
comprised of the first and the third 
components of the overall product state). Since the three entries in that row all commute with each other, it does not matter in what
order these measurements are performed. The outcome of each measurement is in $\{1, -1\}$ and Alice writes the corresponding outcome
in the corresponding column of the row $i$. It can be checked that these three measurements satisfy the constraint on Alice 
(i.e. their product will always be $1$). Similarly, on receiving the column index $j$, Bob, for each $k \in \{1,2,3\}$ 
carries out the measurement given by the $(k,j)$ entry of the Mermin-Peres square on her pair state (i.e. the pair qubit comprised of
the second and the fourth 
components of the overall product state). Since the three entries in that column all commute with each other, it does not matter in what
order these measurements are performed. The outcome of each measurement is in $\{1, -1\}$ and Bob writes the corresponding outcome
in the corresponding row of the column $j$. It can be checked that these three measurements satisfy the constraint on Bob
(i.e. their product will always be $-1$). The incredible thing is that, with these strategies, we will have, for each 
$i,j \in \{1, 2, 3\}$ that the product of the outcome of Alice in column $j$ of row $i$ and the outcome of Bob in row $i$ of column $j$
will always be $1$. Hence the winning probability of Alice and Bob in the Mermin-Peres game becomes $1$ if they are provided with
two pairs of entangled qubits as above and then use the strategies based on the Mermin-Peres square, as just described.

\end{document}